\newtheorem{thrm}{Theorem}
\newtheorem{prop}{Proposition}
\newcommand{\ds}{\displaystyle}
\def\R{\mathbb R}
\def\pa{\partial}
\def\Div{\mathrm{div}}
\def\N{\mathbb N}
\def\R{\mathbb R}
\def\T{\mathcal T}
\def\M{\mathcal M}
\def\P{\mathcal P}
\def\m{{\rm m}}
\def\mK{|K|}
\def\msig{|\sigma|}
\def\ts{\tau_{\sigma}}
\def\E{\mathcal E}
\def\EK{\E_{K}}
\def\Eext{\E_{ext}}
\def\somK{\ds\sum_{K\in\T}}
\def\somsig{\ds\sum_{{\sigma \in \E;K=K_{\sigma}}}}
\def\somsige{\ds\sum_{\sigma \in \E}}
\def\NKn{N_K^n}
\def\PKn{P_K^n}
\def\NKnp{N_K^{n+1}}
\def\PKnp{P_K^{n+1}}
\def\NKeq{N_K^{*}}
\def\PKeq{P_K^{*}}
\title{Uniform $L^\infty$ estimates for approximate solutions of the 
bipolar drift-diffusion system}
\author{M.~Bessemoulin-Chatard\footnote{Univ. Nantes, CNRS, UMR 6629 -- Laboratoire Jean Leray, F-44000 Nantes, France, Marianne.Bessemoulin@univ-nantes.fr} \and C.~Chainais-Hillairet\footnote{Univ. Lille, CNRS, UMR 8524 -- Laboratoire Paul Painlev\'e,
F-59000 Lille, France, Claire.Chainais@math.univ-lille1.fr}\and A.~J\"ungel\footnote{Institute for Analysis and Scientific Computing, TU Wien, 
1040 Wien, Austria, juengel@tuwien.ac.at}}
\begin{document}

\maketitle

\begin{abstract} We establish uniform $L^\infty$ bounds for approximate solutions of the drift-diffusion system for electrons and holes in semiconductor devices, computed with the Schar\-fetter-Gummel finite-volume scheme. The proof is based on a Moser iteration technique adapted to the discrete case.
\end{abstract}
\textbf{Keywords: } volume scheme, drift-diffusion, Moser iterations \\[1pt]
{\bf MSC }(2010){\bf:} 65M08, 35B40.

\section{Introduction}

We consider the Van Roosbroeck's bipolar drift-diffusion system on 
$\Omega\times(0,T)$, where $\Omega$ is a domain of $ \R^d$ ($d=2,\,3$):
\begin{subequations}\label{systemDD}
\begin{gather}
\pa_{t}N+\Div(-\nabla N+N\nabla\Psi)=-R(N,P),\label{systemDD_N}\\
\pa_{t}P+\Div(-\nabla P-P\nabla\Psi)=-R(N,P),\label{systemDD_P}\\
-\lambda^{2}\Delta\Psi=P-N+C.\label{systemDD_Psi}
\end{gather}
\end{subequations}
The unknowns are the electron density $N$, the hole density $P$ and  
the electrostatic potential $\Psi$. The doping profile $C(x)$ is given and $\lambda$ is the scaled Debye length. This system is supplemented with initial densities $N_{0}$, $P_{0}$,  Dirichlet boundary conditions on $\Gamma^D$ ($N^D$, $P^D$, $\Psi^D$) and homogeneous Neumann boundary conditions on $\Gamma^N$ (with  $\partial\Omega=\Gamma^D\cup\Gamma^N$, $\Gamma^D\cap\Gamma^N=\emptyset$,  and $\text{m}(\Gamma^D)>0$). The Dirichlet boundary conditions are describing Ohmic contacts, while homogeneous Neumann boundary conditions are for the insulated boundary segments. Dirichlet boundary conditions for \eqref{systemDD} may depend on time, but we
assume time-independent data to simplify.
The recombination-generation rate is written under the following form which includes Shockley--Read--Hall  and  Auger terms:
\begin{equation}\label{forme-gene-R}
R(N,P)=R_{0}(N,P)(NP-1).
\end{equation}
In what follows, we consider the following (standard) assumptions:
\begin{description}
\item[(H1)] $C\in L^\infty(\Omega)$,
\item[(H2)]  $N^D,\,P^D\in L^\infty\cap H^1(\Omega)$, $\Psi^D\in H^1(\Omega)$,
\item[(H3)] $N^D\,P^D=1,$
\item[(H4)] $\exists M>0$ such that $0\leq N_{0},\,P_{0},\,N^D,\,P^D\leq M$ a.e. on $\Omega$,
\item[(H5)] 
$
\exists \bar{R}>0 \text{ such that }0\leq R_{0}(N,P)\leq\bar{R}(1+|N|+|P|)\quad\forall N,\,P\in\R.
$
\end{description}
Hypothesis (H3) means that the boundary data are in thermal equilibrium.
Existence and uniqueness of weak solutions to system \eqref{systemDD} have been proved in \cite{Gajewski1986}. Nonnegativity of the densities and uniform-in-time upper bounds have also been shown in \cite{Gajewski1986}. The proof is based on an approach proposed by Alikakos \cite{Alikakos1979}, closely related to the Moser iteration technique 
\cite{Moser1960}.  
\smallskip

Let $\Delta t>0$ be the time step and let consider an admissible mesh of $\Omega$. It is given by a family $\mathcal{T}$ of control volumes, a family $\mathcal{E}$ of edges (or faces) and a family of points $(x_{K})_{K \in \mathcal{T}}$ which satisfy Definition 9.1 in \cite{Eymard2000}. In the set of edges $ \mathcal{E}$, we distinguish the set of interior edges  $\mathcal{E}_{int}$ from the set of boundary edges $\mathcal{E}_{ext}$. We split $\mathcal{E}_{ext}$ into $ \mathcal{E}_{ext}=\mathcal{E}_{ext}^{D} \cup 
\mathcal{E}_{ext}^{N}$, where $ \mathcal{E}_{ext}^{D}$ and $ \mathcal{E}_{ext}^{N}$
is the set of Dirichlet and Neumann boundary edges, respectively. For a control volume $K \in \mathcal{T}$, we denote by 
$\mathcal{E}_{K}=\mathcal{E}_{int,K}\cup \mathcal{E}_{ext,K}^{D}\cup\mathcal{E}_{ext,K}^{N}$.
For all $\sigma \in \mathcal{E}$, we define $\tau_{\sigma}=\msig/d_{\sigma}$, where $d_{\sigma}=\text{d}(x_{K},x_{L})$ for $\sigma=K|L \in \mathcal{E}_{int}$, and $d_{\sigma}=\text{d}(x_{K},\sigma)$ for $\sigma \in \mathcal{E}_{ext}$.
We also need the following assumptions on the mesh:
\begin{subequations}\label{hyp_mesh}
\begin{gather}
\exists\xi >0 \text{ such that }d(x_{K},\sigma)\geq \xi\,d_{\sigma},\quad\forall K\in\T,\quad \forall\sigma\in\EK, \label{regmesh}\\
\exists c_{0}>0 \text{ such that }\ts\geq c_{0},\quad\forall\sigma\in\E. \label{hyp-ts}
\end{gather}
\end{subequations}
A finite volume discretization for \eqref{systemDD} provides an approximate solution $u_{\T}^{n}=(u_{K}^{n})_{K\in\T}$ for all $n\geq 0$ and approximate boundary values $u_{\E^{D}}=(u_{\sigma})_{\sigma\in\Eext^{D}}$ for $u=N,\,P,\,\Psi$. For any vector $u_{\M}=(u_{\T},u_{\E^{D}})$, we define 
$$
D_{K,\sigma}u=u_{K,\sigma}-u_{K}, \quad  \quad D_{\sigma}u=|D_{K,\sigma}u|, \ \forall K\in\T, \forall \sigma \in\E_K,
$$
where $u_{K,\sigma}$ is either $u_L$ ($\sigma=K|L$), $u_\sigma$ ($\sigma\in\E_{K,ext}^D$) or $u_K$ ($\sigma\in\E_{K,ext}^N$). We also  define the discrete $H^{1}$-seminorm $|\cdot|_{1,\M}$ by
$$
|u_{\M}|_{1,\M}^{2}=\somsige\ts(D_{\sigma}u)^{2},\quad\forall u_{\M}=(u_{\T},u_{\E^{D}}).
$$
We define the initial conditions $N_{K}^0$, $P_{K}^0$ as the mean values of $N_0$ and $P_0$ over $K\in\T$.  The boundary conditions are also approximated by taking the mean values of  $N^D$, $P^D$  and $\Psi^D$ 
over each Dirichlet boundary edge $\sigma\in\E_{ext}^D$.

We are now in the position to define the scheme for \eqref{systemDD}, based on a backward Euler in time discretization. For all $K\in\T$ and $n\geq 0$,
\begin{subequations}\label{scheme}
\begin{align}
&\mK \frac{N_K^{n+1}-N_K^n}{\Delta t}+\ds\sum_{\sigma\in {\mathcal E}_K}{\mathcal F}_{K,\sigma}^{n+1}=-\mK\,R(N_{K}^{n+1},P_{K}^{n+1}),\label{scheme-N}\\
&\mK \frac{P_K^{n+1}-P_K^n}{\Delta t}+\ds\sum_{\sigma\in {\mathcal E}_K}{\mathcal G}_{K,\sigma}^{n+1}=-\mK\,R(N_{K}^{n+1},P_{K}^{n+1}),\label{scheme-P}\\
&-\lambda^2\sum_{\sigma\in {\mathcal E}_K}\tau_{\sigma} D_{K,\sigma}\Psi^{n}=\mK (P_K^{n}-N_K^{n}+C_{K}),\label{scheme-Psi}
\end{align}
\end{subequations}
where ${\mathcal F}_{K,\sigma}^{n+1}$ and ${\mathcal G}_{K,\sigma}^{n+1}$ are the Scharfetter-Gummel fluxes
\begin{subequations}\label{SGfluxes}
\begin{align}
&\label{FLUX-SG-N}
{\mathcal F}_{K,\sigma}^{n+1}=
\tau_{\sigma}\left[ B\left(-D_{K,\sigma}\Psi^{n+1}\right)N_K^{n+1}-B\left(D_{K,\sigma}\Psi^{n+1}\right)N_{K,\sigma}^{n+1} \right],\\
&\label{FLUX-SG-P}
{\mathcal G}_{K,\sigma}^{n+1}=
\tau_{\sigma}\left[ B\left(D_{K,\sigma}\Psi^{n+1}\right)P_K^{n+1}-B\left(-D_{K,\sigma}\Psi^{n+1}\right)P_{K,\sigma}^{n+1} \right],
\end{align}
\end{subequations}
and $B$ is the Bernoulli function 
$B(x)=x/(e^x-1)$ for $x\neq 0$, $B(0)=1$.

 In \cite{Bessemoulin-Chatard2016}, the existence of a solution to scheme \eqref{scheme}-\eqref{SGfluxes} and the boundedness of the approximate densities are shown, but the bounds depend on time and blow up when time goes to infinity. The only case where the result is uniform in time is that of zero doping profile. The purpose of this paper is to adapt the ideas developed in \cite{Gajewski1986,Gajewski1989} to the discrete framework to obtain uniform-in-time $L^\infty$ estimates for the approximate densities obtained with scheme \eqref{scheme}--\eqref{SGfluxes} for general doping profiles. 
Our main result reads as follows.
\begin{thrm}\label{thrm-main}
Let (H1)--(H5) hold and let $\M=(\T,\E,\P)$ be an admissible mesh of $\Omega$ satisfying \eqref{hyp_mesh}. Any solution $(N_{\T}^n,P_{\T}^n,\Psi_{\T}^n)_{n\geq 0}$ to \eqref{scheme}--\eqref{SGfluxes} satisfies 
\vspace*{-0.2cm}
\begin{equation}\label{estLinf}
\exists \kappa >0,\ \forall n\ge 0,\quad \|N_{\T}^n\|_{L^\infty(\Omega)}\leq \kappa \text{ and } \|P_{\T}^n\|_{L^\infty(\Omega)}\leq \kappa. 
\end{equation}
The constant $\kappa$ depends only on the initial and boundary data, $C$, $\lambda$, $\bar{R}$, $\Omega$ and $d$, and on the constants $\xi$ and $c_{0}$ given in \eqref{hyp_mesh}, but not on $n$.
\end{thrm}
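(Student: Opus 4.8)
The plan is to implement a discrete Moser iteration, following the strategy of Gajewski–Gröger \cite{Gajewski1986,Gajewski1989} but adapted to the Scharfetter–Gummel finite-volume setting. The first step is to obtain uniform control on the potential. From the discrete Poisson equation \eqref{scheme-Psi}, the fact that $\Psi^D\in H^1$ and $C\in L^\infty$, together with the mesh regularity \eqref{hyp_mesh} and a discrete elliptic regularity / discrete Sobolev estimate, I would derive a bound $\|\DPsiKs\|_\infty\le C_\Psi$ in terms of $\|P_\T^n\|_\infty$, $\|N_\T^n\|_\infty$, $\|C\|_\infty$, $\lambda$, and the mesh constants. Since the Bernoulli function $B$ satisfies $B(x)\le \max(1,e^{-x})$ and $B(x),B(-x)>0$, this gives two-sided control of the flux coefficients $\tau_\sigma B(\pm\DPsiKs)$ in terms of $C_\Psi$; this is essential because the Moser estimates will depend on these coefficients through $C_\Psi$, and hence circularly on $\|N_\T^n\|_\infty,\|P_\T^n\|_\infty$ — so the iteration must be set up carefully (e.g. fixing a time level, or treating the coupling perturbatively, or — as is likely cleanest — working with the pair $(N,P)$ simultaneously and closing the estimate at the end).

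The core step is the discrete energy estimate for powers. Fix $p\ge 1$ (eventually $p=2^k$). Multiply \eqref{scheme-N} by $\Delta t\,(N_K^{n+1}-M)_+^{p}$ (or a suitable truncation/shift so that the Dirichlet data, bounded by $M$ via (H4), drop out on $\E_{ext}^D$ and the Neumann terms vanish automatically) and sum over $K$ and over $n$. The key algebraic facts to establish are: (i) a discrete convexity/time-monotonicity inequality of the form $(a-b)\,a_+^{p}\ge \frac{1}{p+1}\big((a_+)^{p+1}-(b_+)^{p+1}\big)$ to handle the time-difference term telescopically; (ii) a discrete integration-by-parts producing $\sum_\sigma \tau_\sigma B(\mp\DPsiKs)\,D_{K,\sigma}N\cdot D_{K,\sigma}\big((N-M)_+^{p}\big)$, and then the numerical-dissipation lower bound $D_{K,\sigma}w\cdot D_{K,\sigma}(w_+^p)\ge c(p)\,\big(D_\sigma(w_+^{(p+1)/2})\big)^2$ with $c(p)\sim 1/(p+1)$ (a discrete chain-rule inequality, the discrete analogue of $\nabla w\cdot\nabla(w^p)=\frac{4p}{(p+1)^2}|\nabla w^{(p+1)/2}|^2$); and (iii) control of the drift contribution $N\,B(D_{K,\sigma}\Psi)$ cross terms, which are lower-order and absorbed using Young's inequality and the bound $C_\Psi$. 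The recombination term $-R(N,P)$ is sign-favorable when $N$ is large (since $R_0\ge 0$ and $NP-1>0$ once $N>M\ge 1$, using (H3), (H5)), so it can be dropped or used to help; this is where (H5)'s linear growth keeps the right-hand side under control. The outcome is an inequality of the shape
\begin{equation*}
\|(N_\T^{n+1}-M)_+\|_{p+1}^{p+1}+\frac{c}{p+1}\sum_{m\le n}\Delta t\,\big|(N_\T^{m+1}-M)_+^{(p+1)/2}\big|_{1,\M}^2\le \|(N_\T^{0}-M)_+\|_{p+1}^{p+1}+C(p+1)^{\alpha}\sum_{m\le n}\Delta t\,\|(N_\T^{m+1}-M)_+\|_{q}^{q}
\end{equation*}
for appropriate exponents, with the constant $C$ depending only on the data listed in the theorem.

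Next comes the iteration itself. Using the discrete Sobolev inequality on the admissible mesh (available under \eqref{hyp_mesh}; in dimension $d$ it gives $\|v\|_{2d/(d-2)}\lesssim |v|_{1,\M}$ for $d=3$, and any Lebesgue exponent for $d=2$, with constant depending on $\Omega$, $d$, $\xi$, $c_0$), interpolated against the $L^{p+1}$ bound, I would upgrade the dissipation term to a higher Lebesgue norm and set up the recursion $A_{k+1}\le (C b^k)^{1/p_k} A_k^{\theta}$ with $p_k=2^k$, $b>1$. Iterating and taking $k\to\infty$ yields $\sup_n\|(N_\T^n-M)_+\|_\infty\le \kappa_0$, with $\kappa_0$ depending only on the data — the uniform-in-time feature coming from the fact that the time-summed dissipation on the left dominates the time-summed right-hand side when the absorption constant is handled correctly (this is the Alikakos/Gajewski trick: the bad term has a smaller power and can be reabsorbed, giving a bound independent of $n$ rather than growing). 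The same argument run for $P$ gives the companion bound; then $\kappa=M+\max(\kappa_0,\kappa_0')$ works.

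The main obstacle I anticipate is twofold. First, the circular coupling through $\Psi$: the flux coefficients depend on $\|\DPsiKs\|_\infty$, which depends on the densities one is trying to bound. The resolution is either an a priori (time-local) bootstrap — one already knows from \cite{Bessemoulin-Chatard2016} that the densities are finite at each fixed $n$, so $C_\Psi$ is finite; the real content is that the final $\kappa$ does not inherit the blow-up — or, more robustly, to carry the two Moser iterations for $N$ and $P$ in parallel and observe that the "bad" constant multiplying them is sublinear (thanks to (H5) and the $B$-estimate $|x\,B(x)|\le C(1+|x|)$ type bounds), so Young's inequality closes the system uniformly. Second, getting the power of $p$ in the constant right in step (ii)–(iii): the discrete chain-rule inequality must be proved with the correct $1/(p+1)$-type factor, since otherwise the Moser recursion's product $\prod_k C^{1/2^k}$ with $C\sim p_k^\alpha = 2^{k\alpha}$ fails to converge. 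These are technical but standard points once the discrete functional-analytic toolbox (discrete Sobolev, discrete integration by parts, the Bernoulli-function inequalities) is in place.
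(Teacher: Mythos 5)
Your overall architecture (powers of $(N-M)^+$ so the boundary data drop out, a discrete chain-rule inequality with the $q/(q+1)^2$ factor, a dyadic Moser recursion, and an absorption mechanism to keep the bound uniform in $n$) matches the paper. But there are two genuine gaps where your plan would not close, and both are resolved in the paper by ingredients you do not invoke.

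First, the drift terms. You propose to control the flux coefficients through an $L^\infty$ bound on $D_{K,\sigma}\Psi$ obtained by discrete elliptic regularity, acknowledge that this is circular in the densities, and offer only vague resolutions (bootstrap, or ``Young's inequality closes the system''). The paper never bounds $\|D_{K,\sigma}\Psi\|_\infty$ in terms of the densities. Instead: (a) the discrete entropy--entropy production inequality \eqref{ineq-entropy-discret} gives $\mathbb{E}^n\le\mathbb{E}^0$ uniformly in time, hence a uniform bound on $|\Psi^n_{\M}-\Psi^*_{\M}|_{1,\M}$, which combined with the mesh hypothesis $\ts\ge c_0$ yields a pointwise bound $D_\sigma\Psi^{n}\le D$ and therefore the lower bound $B(D_\sigma\Psi^{n})\ge\gamma>0$ in \eqref{minBDPsi} --- this is what makes the dissipation term coercive with a constant independent of the solution; (b) the remaining drift cross terms are not estimated via any sup-norm of $D\Psi$ at all, but by a \emph{second} discrete integration by parts that re-inserts the discrete Poisson equation \eqref{scheme-Psi}, turning $\sum_\sigma\ts D_{K,\sigma}\Psi^{n+1}[\cdots]$ into volume terms involving $P_K-N_K+C_K$; the $P-N$ contribution then has a favorable sign by monotonicity of $x\mapsto((x-M)^+)^q$, leaving only $\|C\|_\infty/\lambda^2$ times $V_{q+1}$ and $V_q$. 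This step forces one to run the iteration on the \emph{joint} functional $V_q^n=\sum_K\mK[(N_{M,K}^n)^q+(P_{M,K}^n)^q]$; your plan of running two separate iterations for $N$ and $P$ destroys exactly the cancellation that removes the circularity.

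Second, the base case. Your recursion needs $\sup_n$ of a low Lebesgue norm of the densities, uniformly in time; you never say where this comes from. In the paper it again comes from the entropy bound $\mathbb{E}^n\le\mathbb{E}^0$ together with $x\log(x/y)-x+y\ge x/2-y$, giving a uniform $L^1$ bound $W_0^n$. A related minor but real error: your claim that the recombination term is sign-favorable because $NP-1>0$ once $N>M$ is false ($P$ may be arbitrarily small); the paper simply estimates $R_0(N,P)(1-NP)\le R_0(N,P)$ and uses the growth bound (H5), which is what actually works. Finally, your time-summed energy inequality does not by itself give a bound independent of the number of time steps; the paper keeps the one-step form \eqref{ode} with the damping term $-\varepsilon_k W_k^{n+1}$ on the right, which is what delivers $W_k^n\le B\varepsilon_k^{-1}(\cdots)$ for all $n$. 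You gesture at the Alikakos absorption trick, but the displayed inequality in your proposal is not in a form from which it follows.
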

This theorem establishes a part of the assumptions needed to prove the exponential decay of approximate solutions given by scheme \eqref{scheme}--\eqref{SGfluxes} towards an approximation of the thermal equilibrium \cite[Theorem 3.1]{Bessemoulin-Chatard2016}. However, a uniform positive lower bound for the densities is also required, which is not easy to prove, and its proof is an open problem.\\
The proof of \eqref{estLinf} applies a Nash-Moser type iteration method based on $L^r$ bounds \cite{Alikakos1979,Moser1960}. Let us mention that this method has already been applied to a discrete setting in \cite{FGL2014}. As we deal here with equations on a bounded domain, we have to take care about the boundary conditions. Therefore, as in \cite{Kowalczyk2005}, we establish \eqref{estLinf} for  $N_M=(N-M)^+$ and $P_M=(P-M)^+$, where $M$ is given in (H4), instead of $N$ and $P$. The proof is detailed in Section \ref{sec-proof}. The uniform-in-time $L^1$ bounds for the densities necessary to initialize the Moser iteration method are obtained thanks to an entropy-entropy production estimate, recalled in Section \ref{sec-preliminary}.

\section{Discrete entropy-entropy production inequality}\label{sec-preliminary}

The thermal equilibrium is a steady state for which the electron and hole current
densities and the recombination-generation term vanish. If  (H3) is satisfied, there exists $\alpha\in\R$ such that the thermal equilibrium is defined by 
\begin{subequations}\label{ET}
\begin{gather}
N^\ast=e^{\alpha+\Psi^\ast},\quad P^\ast=e^{-\alpha-\Psi^\ast},\\
-\lambda^2\Delta\Psi^\ast= e^{-\alpha-\Psi^\ast}-e^{\alpha+\Psi^\ast}+C,\\
\Psi^\ast=\Psi^D \mbox{ on } \Gamma^D,\quad \nabla\Psi^\ast\cdot\nu=0\mbox{ on } \Gamma^N.
\end{gather}
\end{subequations}
An approximation of the thermal equilibrium $(N_\T^\ast, P_\T^\ast,\Psi_\T^\ast)$ is given by
\begin{gather}
-\lambda^2\sum_{\sigma\in {\mathcal E}_K}\tau_{\sigma} D_{K,\sigma}\Psi^\ast=\mK \left(e^{-\alpha-\Psi_{K}^\ast}-e^{\alpha+\Psi_{K}^\ast}+C_{K}\right),\ \forall K\in\T, \label{scheme-Psi-eq}\\
N_{K}^{*}=e^{\alpha+\Psi_{K}^\ast},\quad P_{K}^{*}=e^{-\alpha-\Psi_{K}^\ast},\ \forall K\in\T.\label{scheme-N-P-eq}
\end{gather}
Let $H(x)=x\log x-x+1$. The discrete relative entropy is defined by
\begin{multline}\label{defEdiscret}
\mathbb{E}^{n}=\frac{\lambda^{2}}{2}\left|\Psi_{\M}^{n}-\Psi_{\M}^{*}\right|^{2}_{1,\M}+\somK\mK\Bigl(H(\NKn)-H(\NKeq)-\log\NKeq(\NKn-\NKeq) \\[-0.2cm]
+ H(\PKn)-H(\PKeq)-\log\PKeq(\PKn-\PKeq)\Bigl).
\end{multline}
We also define the discrete entropy production:
\begin{multline}\label{defIdiscret}
\mathbb{I}^{n}=\somsig \ts \left[\min(\NKn,N_{K,\sigma}^{n})\left(D_{\sigma}(\log(N^{n})-\Psi^{n})\right)^{2}\right.\\[-0.2cm]
+\left.\min(\PKn,P_{K,\sigma}^{n})\left(D_{\sigma}(\log(P^{n})+\Psi^{n})\right)^{2}\right]\\
+\sum_{K\in\T}\mK R_{0}(\NKn,\PKn)(\NKn\PKn-1)\log(\NKn\PKn),
\end{multline}
We recall the discrete entropy-entropy production inequality proved in \cite{Chatard2011}.

\begin{prop} For all $n\geq 0$,
\begin{equation}\label{ineq-entropy-discret}
0\leq \mathbb{E}^{n+1}+\Delta t\mathbb{I}^{n+1}\leq \mathbb{E}^n.
\end{equation}
\end{prop}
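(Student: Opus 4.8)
The statement splits into nonnegativity, $\mathbb{E}^{n+1}+\Delta t\,\mathbb{I}^{n+1}\ge 0$, and dissipativity, $\mathbb{E}^{n+1}+\Delta t\,\mathbb{I}^{n+1}\le\mathbb{E}^n$. Nonnegativity is the easy half: since $H'(x)=\log x$, each density summand in $\mathbb{E}^n$ is a Bregman distance of the strictly convex $H$, hence nonnegative once $\NKn,\PKn\ge 0$ (which holds because the Scharfetter--Gummel scheme preserves positivity and the data are nonnegative by (H4)), while the $|\cdot|_{1,\M}^2$-term is visibly $\ge 0$; in $\mathbb{I}^{n+1}$ the first two sums are sums of nonnegative quantities and the last is nonnegative because $R_0\ge 0$ by (H5) and $(xy-1)\log(xy)\ge 0$ for all $x,y>0$. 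The work is in the dissipation estimate.

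For that I would run the classical discrete entropy computation. First use convexity: $H(\NKnp)-H(\NKn)\le\log(\NKnp)(\NKnp-\NKn)$ — here implicit Euler gives the right sign — its analogue for $P$, and $\tfrac{\lambda^2}{2}|\Psi^{n+1}_\M-\Psi^*_\M|^2_{1,\M}-\tfrac{\lambda^2}{2}|\Psi^n_\M-\Psi^*_\M|^2_{1,\M}\le\lambda^2\somsige\ts\,D_{K,\sigma}(\Psi^{n+1}-\Psi^*)\,D_{K,\sigma}(\Psi^{n+1}-\Psi^n)$. Adding these and writing, from \eqref{scheme-N-P-eq}, $\log\NKeq=\alpha+\Psi^*_K$ and $\log\PKeq=-\alpha-\Psi^*_K$, bounds $\mathbb{E}^{n+1}-\mathbb{E}^n$ by a linearized quantity. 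Then substitute the scheme \eqref{scheme}: replace $\mK(\NKnp-\NKn)$ by $-\Delta t\sum_{\sigma\in\EK}\FKsig^{n+1}-\Delta t\,\mK R(\NKnp,\PKnp)$, similarly for $P$, and $\sum_{\sigma\in\EK}\ts D_{K,\sigma}(\Psi^{n+1}-\Psi^*)$ using \eqref{scheme-Psi} at level $n+1$ together with \eqref{scheme-Psi-eq}. Now integrate by parts discretely in every term, splitting $D_{K,\sigma}(\log N^{n+1}-\log N^*)=D_{K,\sigma}(\log N^{n+1}-\Psi^{n+1})+D_{K,\sigma}(\Psi^{n+1}-\Psi^*)$ and likewise for $P$; all boundary contributions drop because $\log N^{n+1}-\log N^*$, $\log P^{n+1}-\log P^*$ and $\Psi^{n+1}-\Psi^*$ vanish on the Dirichlet edges — this is where hypothesis (H3), $N^DP^D=1$, is used.

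What remains organizes into three blocks. The flux--diffusion block, $\Delta t\somsig\FKsig^{n+1}D_{K,\sigma}(\log N^{n+1}-\Psi^{n+1})$ plus its hole analogue with $\log P^{n+1}+\Psi^{n+1}$, is controlled by Bernoulli-function algebra: with $s=D_{K,\sigma}(\log N^{n+1}-\Psi^{n+1})$ and $B(-x)=e^xB(x)$ one rewrites $\FKsig^{n+1}=\ts B\bigl(-D_{K,\sigma}\Psi^{n+1}\bigr)\NKnp(1-e^s)$ and $N^{n+1}_{K,\sigma}=\NKnp e^{D_{K,\sigma}\Psi^{n+1}+s}$, and then, using that $B$ is decreasing and $(e^s-1)s\ge 0$, a short case distinction on the sign of $D_{K,\sigma}\Psi^{n+1}+s$ gives $\FKsig^{n+1}D_{K,\sigma}(\log N^{n+1}-\Psi^{n+1})\le-\ts\min(\NKnp,N^{n+1}_{K,\sigma})\bigl(D_\sigma(\log N^{n+1}-\Psi^{n+1})\bigr)^2$; summed (and with the symmetric hole bound) this block is $\le-\Delta t$ times the first two summands of $\mathbb{I}^{n+1}$. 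The recombination block equals $-\Delta t\sum_{K\in\T}\mK R(\NKnp,\PKnp)\bigl[(\log\NKnp-\log\NKeq)+(\log\PKnp-\log\PKeq)\bigr]$, which, since $\log\NKeq+\log\PKeq=0$, is $-\Delta t$ times the last summand of $\mathbb{I}^{n+1}$. The coupling block collects all remaining cross terms that pair $\Psi$-differences with $N$- and $P$-differences; re-expressing the density differences through \eqref{scheme-Psi} at levels $n$ and $n+1$ and through \eqref{scheme-Psi-eq} and summing by parts once more, every interior contribution cancels by antisymmetry and every Dirichlet-boundary contribution is proportional to $D_{K,\sigma}\bigl((\Psi^{n+1}-\Psi^n)+(\Psi^*-\Psi^{n+1})+(\Psi^n-\Psi^*)\bigr)=0$, so the block vanishes. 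Collecting the three blocks yields $\mathbb{E}^{n+1}-\mathbb{E}^n\le-\Delta t\,\mathbb{I}^{n+1}$, which together with nonnegativity proves \eqref{ineq-entropy-discret}.

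The two delicate points are, first, the sharp Scharfetter--Gummel flux bound — one genuinely needs the minimum of the two adjacent densities, and only the case distinction produces it — and, second, the bookkeeping in the coupling block: the $\Psi$-terms must cancel identically, not merely carry a favorable sign, so one has to keep track of every coefficient generated by the two discrete Poisson equations and the successive summations by parts.
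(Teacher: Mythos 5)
The paper does not actually prove this proposition --- it recalls it from \cite{Chatard2011} --- and your argument is essentially the proof given there: convexity of $H$ and of the squared seminorm evaluated at the implicit time level, substitution of the scheme, discrete summation by parts with Dirichlet boundary contributions vanishing thanks to the thermal-equilibrium compatibility of the data, the Scharfetter--Gummel inequality $\FKsig^{n+1}D_{K,\sigma}(\log N^{n+1}-\Psi^{n+1})\le-\ts\min(\NKnp,N_{K,\sigma}^{n+1})\bigl(D_\sigma(\log N^{n+1}-\Psi^{n+1})\bigr)^2$ (whose case distinction is a little more delicate than ``$B$ decreasing and $(e^s-1)s\ge0$'' --- one also needs the monotonicity of $s\mapsto(1-e^{-s})/s$ --- but the lemma is correct), and the identity $\log\NKeq+\log\PKeq=0$ for the recombination term. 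One bookkeeping remark: if you first sum the Poisson contribution by parts to get $\sum_{K\in\T}\mK(\Psi_K^{n+1}-\Psi_K^*)\bigl[(P_K^{n+1}-P_K^n)-(N_K^{n+1}-N_K^n)\bigr]$ and then use $\log\NKeq=\alpha+\Psi_K^*$, $\log\PKeq=-\alpha-\Psi_K^*$, the potential cross-terms are absorbed exactly into the quasi-Fermi differences $\log\NKnp-\Psi_K^{n+1}-\alpha$ and $\log\PKnp+\Psi_K^{n+1}+\alpha$, so your ``coupling block'' is empty from the outset and the telescoping cancellation you invoke for it is not needed.
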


Summing \eqref{ineq-entropy-discret} over $n$, we have $\mathbb{E}^n \leq \mathbb{E}^0$, which gives a uniform-in-time estimate for $\mathbb{E}^n$. 
Then, if $\M$ satisfies  \eqref{hyp-ts}, and since there exists $C^*$ such that $|\Psi_{\M}^*|_{1,\M}\leq C^*$ (see \cite[Lemma 3.3]{Chainais-Hillairet2007}), we have 
$D_{\sigma}\Psi^{n+1}\leq D$, where $D>0$ only depends on $\mathbb{E}^0$, $\lambda$, $c_{0}$ and $C^*$. The properties of the Bernoulli function ensure that 
\begin{equation}\label{minBDPsi}
\exists \gamma \in (0,1],\quad B(D_{\sigma}\Psi^{n+1})\geq\gamma, \quad \forall \sigma\in\E, \forall n\geq 0.
\end{equation}

\section{Proof of Theorem \ref{thrm-main}}\label{sec-proof}

We set
$$
\begin{gathered}
N_{M,K}^n=(N_K^n-M)^+,\ P_{M,K}^n=(P_K^n-M)^+,\quad \forall K\in\T,\ \forall n\geq 0,\\
\mbox{ and } V_q^n=\sum_{K\in\T} \mK \left[ (N_{M,K}^{n})^{q}+(P_{M,K}^{n})^{q}\right],\quad \forall n\geq 0,\ \forall q\geq 1.
\end{gathered}
$$
We start by establishing the following result about the evolution of $V_{q+1}^n$.
\begin{prop}\label{propineq1dis}
 Let $q\geq 1$. There exist positive constants $\mu$ and $\nu$ only depending on $\|C\|_{\infty}$, $\lambda$, $M$, $\bar{R}$  and $\gamma\in (0,1]$ such that
\begin{multline}\label{ineq1dis}
\frac{1}{\Delta t} \left(V_{q+1}^{n+1}-V_{q+1}^{n}\right)
+\frac{4q}{q+1}\gamma\sum_{\sigma\in\E}\left[(D_{\sigma}(N_M^{n+1})^{\frac{q+1}{2}})^2+(D_{\sigma}(P_M^{n+1})^{\frac{q+1}{2}})^2\right]\\
\leq \mu q V_{q+1}^{n+1} + \nu |\Omega|.
\end{multline}
\end{prop}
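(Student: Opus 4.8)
The plan is to test the discrete electron equation \eqref{scheme-N} with the discretized test function $(q+1)(N_{M,K}^{n+1})^q$ (and similarly the hole equation with $(q+1)(P_{M,K}^{n+1})^q$), sum over $K\in\T$, and then sum the two resulting identities. The time-derivative term will produce $\tfrac{1}{\Delta t}(V_{q+1}^{n+1}-V_{q+1}^n)$, up to a favorable sign: using convexity of $s\mapsto s^{q+1}$ one has $(q+1)(N_{M,K}^{n+1})^q(N_K^{n+1}-N_K^n)\ge (N_{M,K}^{n+1})^{q+1}-(N_{M,K}^n)^{q+1}$, since $N_{M,K}^n\le N_{M,K}^{n+1}+(N_K^{n+1}-N_K^n)^-$ and $N_{M,K}$ vanishes whenever $N_K\le M$. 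The recombination term on the right-hand side is handled with (H5): $|R(N,P)|=R_0(N,P)|NP-1|\le \bar R(1+N+P)(1+NP)$ is controlled, after multiplying by $(N_{M,K}^{n+1})^q$ and summing, by $C(\bar R,M)(V_{q+1}^{n+1}+|\Omega|)$ — here one uses that $N_{M,K}^{n+1}>0$ forces $N_K^{n+1}>M$, so powers of $N_K^{n+1}$ are comparable to powers of $1+N_{M,K}^{n+1}$, and Young's inequality absorbs the mixed $N$–$P$ contributions into $V_{q+1}^{n+1}$.

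The heart of the matter is the flux term. After discrete integration by parts, $\sum_K (q+1)(N_{M,K}^{n+1})^q\sum_{\sigma\in\E_K}\F_{K,\sigma}^{n+1}$ becomes $-(q+1)\sum_{\sigma\in\E_{int}}\ts[\,\cdots\,](D_{K,\sigma}(N_M^{n+1})^q)$ plus boundary contributions; the Neumann edges drop out ($D_{K,\sigma}(N_M)=0$ there) and the Dirichlet edges are harmless because $N^D\le M$ forces $N_{M,\sigma}=0$. On each interior edge one writes the Scharfetter–Gummel flux as $\ts B(D_{K,\sigma}\Psi^{n+1})\big(e^{\cdots}N_K^{n+1}-N_{K,\sigma}^{n+1}\big)$-type expression and must extract from $-(N_{K,\sigma}^{n+1}-N_K^{n+1})\big((N_{M,K,\sigma}^{n+1})^q-(N_{M,K}^{n+1})^q\big)$ a term bounded below by a multiple of $\big((N_{M,K,\sigma}^{n+1})^{(q+1)/2}-(N_{M,K}^{n+1})^{(q+1)/2}\big)^2$. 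The elementary inequality $(a-b)(a^q-b^q)\ge \tfrac{4q}{(q+1)^2}\big(a^{(q+1)/2}-b^{(q+1)/2}\big)^2$ for $a,b\ge0$ (after reducing $N$ to $N_M$ on the region where the latter is positive) gives exactly the factor $\tfrac{4q}{q+1}$ in \eqref{ineq1dis} once one divides out an overall $(q+1)$; the lower bound $B(D_\sigma\Psi^{n+1})\ge\gamma$ from \eqref{minBDPsi} supplies the $\gamma$. The part of the flux involving the potential difference, i.e.\ the gap between $B(-D_{K,\sigma}\Psi^{n+1})$ and $B(D_{K,\sigma}\Psi^{n+1})$ multiplied by $N_K^{n+1}$, is a genuine drift term: using $B(-x)-B(x)=x$ and $D_\sigma\Psi^{n+1}\le D$, it is bounded by $\ts D\, N_K^{n+1}\,|D_{K,\sigma}(N_M^{n+1})^q|$, which by Young's inequality splits into a small multiple of the good quadratic term $\ts(D_\sigma(N_M^{n+1})^{(q+1)/2})^2$ (absorbed into the left-hand side, at the cost of shrinking $\tfrac{4q}{q+1}\gamma$ slightly — one should be mildly careful the constant stays of the stated form, which works because $q/(q+1)\ge 1/2$) plus a term of order $\mu q\,\ts(N_K^{n+1})^2(N_{M,K}^{n+1})^{q-1}\lesssim \mu q\, \ts(1+N_{M,K}^{n+1})^{q+1}$, and summing over edges with \eqref{hyp-ts} and the mesh regularity turns $\sum_\sigma\ts(\cdots)$ into a bounded multiple of $V_{q+1}^{n+1}+|\Omega|$.

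Collecting: the time term gives $\tfrac1{\Delta t}(V_{q+1}^{n+1}-V_{q+1}^n)$, the flux term gives $+\tfrac{4q}{q+1}\gamma\sum_\sigma[(D_\sigma(N_M^{n+1})^{(q+1)/2})^2+(D_\sigma(P_M^{n+1})^{(q+1)/2})^2]$ on the left after absorbing the drift remainder, and everything else is bounded by $\mu q V_{q+1}^{n+1}+\nu|\Omega|$. I expect the main obstacle to be the bookkeeping on the interior edges: correctly reducing $N^{n+1}$ to $N_M^{n+1}$ inside the Scharfetter–Gummel structure (one must check the sign/monotonicity arguments when $N_K^{n+1}$ and $N_{K,\sigma}^{n+1}$ straddle $M$), and verifying that absorbing the drift term leaves the coefficient in front of the dissipation of the stated form $\tfrac{4q}{q+1}\gamma$ rather than something that degenerates as $q\to\infty$ — this is where the Moser iteration would otherwise break down, so the constants $\mu,\nu$ must be shown independent of $q$.
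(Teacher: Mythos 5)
Your handling of the time increment, of the recombination term, and of the diffusive part of the flux (the elementary inequality $(a-b)(a^q-b^q)\ge \tfrac{4q}{(q+1)^2}(a^{(q+1)/2}-b^{(q+1)/2})^2$ together with $B(D_\sigma\Psi^{n+1})\ge\gamma$ from \eqref{minBDPsi}) matches the paper. The gap is in the drift part of the flux. You bound it by $\ts D\, N_K^{n+1}\,|D_{K,\sigma}(N_M^{n+1})^q|$, apply Young's inequality, and claim that the remainder $\sum_{\sigma\in\E}\ts\,(1+N_{M,K}^{n+1})^{q+1}$ is controlled by a bounded multiple of $V_{q+1}^{n+1}+|\Omega|$ ``with \eqref{hyp-ts} and the mesh regularity''. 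This step fails: $V_{q+1}^{n+1}$ carries the volume weights $\mK\sim h^d$, whereas $\ts=\msig/d_\sigma\sim h^{d-2}$, so the two sums differ by a factor of order $h^{-2}$ and no mesh-independent comparison of the form $\sum_{\sigma}\ts\,u_{K}\lesssim\sum_{K}\mK\,u_K$ is available. Hypothesis \eqref{hyp-ts} is a \emph{lower} bound on $\ts$ and goes the wrong way, while \eqref{regmesh} controls $\sum_{\sigma\in\EK}\msig d_\sigma$ by $\mK$, not $\sum_{\sigma\in\EK}\msig/d_\sigma$. (This is the discrete counterpart of the fact that $D_\sigma\Psi^{n+1}\le D$ bounds the potential \emph{jump} across an edge, not the discrete gradient $D_\sigma\Psi^{n+1}/d_\sigma$; a Cauchy--Schwarz/Young treatment of the convection term would need an $L^\infty$ bound on $\nabla\Psi$ that is not at hand.) Incidentally, even if the weights did match, your Young step yields a coefficient of order $q^2$ in front of $V_{q+1}^{n+1}$ after rescaling by $q+1$, not the $\mu q$ required by the statement.

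The paper circumvents this with an idea that is absent from your plan: after the first summation by parts, the drift contributions are written as $\sum_{\sigma}\ts\, D_{K,\sigma}\Psi^{n+1}\,\bigl[D_{K,\sigma}((N_M^{n+1})^{q+1})-D_{K,\sigma}((P_M^{n+1})^{q+1})\bigr]$ (plus the analogous term with exponent $q$), and one integrates by parts a \emph{second} time so that the factor $\sum_{\sigma\in\EK}\ts D_{K,\sigma}\Psi^{n+1}$ can be replaced, via the discrete Poisson equation \eqref{scheme-Psi}, by $-\mK\,(P_K^{n+1}-N_K^{n+1}+C_K)/\lambda^2$; this is precisely where the correct volume weight $\mK$ enters. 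The antisymmetric $N$--$P$ structure is then essential: $(P_K^{n+1}-N_K^{n+1})\bigl((N_{M,K}^{n+1})^{q+1}-(P_{M,K}^{n+1})^{q+1}\bigr)\le 0$ by monotonicity of $x\mapsto((x-M)^+)^{q+1}$, so only the doping contribution survives and produces the terms $\tfrac{q}{q+1}\tfrac{\|C\|_\infty}{\lambda^2}V_{q+1}^{n+1}+M\tfrac{\|C\|_\infty}{\lambda^2}V_{q}^{n+1}$ of \eqref{minS2}. This ``second integration by parts, Poisson equation, sign of the cross term'' mechanism is what makes $\mu$ and $\nu$ independent of both $q$ and the mesh, and without it the estimate cannot be closed along your route.
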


\begin{proof}
Multiplying \eqref{scheme-N} (resp. \eqref{scheme-P}) by $(N_{M,K}^{n+1})^q$ (resp. $(P_{M,K}^{n+1})^q$) , summing over $K$ and adding the two equations, we obtain 
$S_1+S_2=S_3$, where $S_1$ contains the discrete time derivatives, $S_2$ the numerical fluxes and $S_3$ the recombination-generation term. Using the elementary
identity $(x-y)x^q\geq (x^{q+1}-y^{q+1})/(q+1)$ for all $x,y\geq 0$ and $q\geq 1$, 
we find that
\begin{equation}\label{minS1}
S_1\geq \frac{1}{q+1}\frac{1}{\Delta t} \left(V_{q+1}^{n+1}-V_{q+1}^{n}\right).
\end{equation}
By a discrete integration by parts on $S_2$, combined with some properties of the Bernoulli function, we have
\begin{multline*}
S_2\geq \frac{4q}{(q+1)^2}\sum_{\sigma\in\E} \ts B(D_\sigma \Psi^{n+1})\left[(D_{\sigma}(N_M^{n+1})^{\frac{q+1}{2}})^2+(D_{\sigma}(P_M^{n+1})^{\frac{q+1}{2}})^2\right]\\
-\frac{q}{q+1}\sum_{\sigma\in\E} \ts D_{K,\sigma}\Psi^{n+1} \left[ D_{K,\sigma}((N_{M}^{n+1})^{q+1})-D_{K,\sigma}((P_{M}^{n+1})^{q+1}))\right]\\
-M\sum_{\sigma\in\E}\ts D_{K,\sigma}\Psi^{n+1}\left[ D_{K,\sigma}((N_{M}^{n+1})^{q})-D_{K,\sigma}((P_{M}^{n+1})^{q}))\right].
\end{multline*}
We perform a discrete integration by parts of the two last sums on the right-hand side, use scheme \eqref{scheme-Psi} and the monotonicity of the functions $x\mapsto ((x-M)^+)^q$ and $x\mapsto ((x-M)^+)^{q+1}$. Combined with \eqref{minBDPsi}, this yields
\begin{multline}\label{minS2}
S_2\geq \frac{4q}{(q+1)^2}\gamma\sum_{\sigma\in\E} \ts \left[(D_{\sigma}(N_M^{n+1})^{\frac{q+1}{2}})^2+(D_{\sigma}(P_M^{n+1})^{\frac{q+1}{2}})^2\right]\\
-\frac{q}{q+1}\frac{\Vert C\Vert_\infty}{\lambda^2}V_{q+1}^{n+1}
-M\frac{\Vert C\Vert_\infty}{\lambda^2}V_{q}^{n+1}.
\end{multline}
Thanks to (H5) and the nonnegativity of the approximate densities, we have 
\begin{multline}\label{ineq2dmembre}
R_{0}(\NKnp,\PKnp)(1-\NKnp\PKnp)\left[\left(N_{M,K}^{n+1}\right)^q+\left(P_{M,K}^{n+1}\right)^q
\right]\\
\leq  \bar{R}(1+2M)\left[(N_{M,K}^{n+1})^q+(P_{M,K}^{n+1})^q\right] 
+\bar{R}\left[
(N_{M,K}^{n+1})^{q+1}+(P_{M,K}^{n+1})^{q+1}\right]\\
+\bar{R}\left[(N_{M,K}^{n+1})^qP_{M,K}^{n+1}+(P_{M,K}^{n+1})^qN_{M,K}^{n+1}\right].
\end{multline}
Then, applying the Young's inequality, we obtain
\begin{align*}
V_q^{n+1} &\leq 
 \frac{1}{q+1}\left(qV_{q+1}^{n+1}+\text{m}(\Omega)\right),\\
\sum_{K\in\T}\mK  
\left[(N_{M,K}^{n+1})^qP_{M,K}^{n+1}+(P_{M,K}^{n+1})^qN_{M,K}^{n+1}\right]
&\leq  V_{q+1}^{n+1}.
\end{align*}
Combining this with \eqref{minS1}, \eqref{minS2} and \eqref{ineq2dmembre} finishes the proof.
\qed
\end{proof}

Now, our aim is to control the term $V_{q+1}^{n+1}$ appearing on the right-hand side of \eqref{ineq1dis}. The discrete Nash inequality \cite[Corollary 4.5]{Bessemoulin-Chatard2015} reads for functions $\chi_\T$ that vanish on a part of the boundary as
$$
\left(\sum_{K\in\T} \mK \chi_K^2\right)^{1+\frac{2}{d}}\leq \frac{{\tilde C}}{\xi}\left(\sum_{\sigma\in\E}\ts (D_\sigma \chi)^2\right)\left(\sum_{K\in\T }\mK \vert \chi_K\vert\right)^{\frac{4}{d}},
$$
where $\xi$ is given in \eqref{regmesh} and ${\tilde C}$ only depends on $\Omega$ and $d$. Thanks to  Young's inequality, it follows for $\varepsilon>0$ that, up to a change of the value of ${\tilde C}$,
$$
\sum_{K\in\T} \mK \chi_K^2\leq \frac{{\tilde C}}{\varepsilon^{d/2}\xi^{d/2}}\left(\sum_{K\in\T }\mK \vert \chi_K\vert\right)^2+
\varepsilon\left(\sum_{\sigma\in\E}\ts (D_\sigma \chi)^2\right).
$$
Applying this inequality to $\chi=\left(N_M^{n+1}\right)^{\frac{q+1}{2}}$ and $\chi=\left(P_M^{n+1}\right)^{\frac{q+1}{2}}$, we have
\begin{equation}\label{ineq3dis}
  V_{q+1}^{n+1}
\leq \frac{{\tilde C}}{(\varepsilon\xi)^{d/2}}
\left(V_{\frac{q+1}{2}}^{n+1}\right)^2
+\varepsilon \sum_{\sigma\in\E} \left[(D_{\sigma}(N_M^{n+1})^{\frac{q+1}{2}})^2+(D_{\sigma}(P_M^{n+1})^{\frac{q+1}{2}})^2\right].
\end{equation}
Arguing similarly as in \cite{DiFrancesco2008} and using the fact that
$\gamma \in (0,1]$, we can find $A>0$ depending only on $\mu$ and hence only on
$\|C\|_{\infty}$, $\lambda$, $M$ and $\bar{R}$ such that 
$$
\frac{\gamma A}{q}\left(\mu q+\frac{\gamma A}{q}\right)\leq \frac{4\gamma q}{q+1},\quad \forall q\geq 1.
$$
Therefore, multiplying \eqref{ineq3dis} by $\mu q+\varepsilon(q)$ with 
$\varepsilon(q)=\gamma A/q$ and adding the resulting equation to \eqref{ineq1dis}, 
we infer that
\begin{equation}\label{ineq4dis}
\frac{V_{q+1}^{n+1}-V_{q+1}^n}{\Delta t} 
\!\leq\! -\varepsilon(q)V_{q+1}^{n+1}\!+\!\nu|\Omega|
\!+\!\frac{{\tilde C}}{\varepsilon(q)^{d/2}\xi^{d/2}}\left(\mu q+\varepsilon(q)\right)
\left( V_{\frac{q+1}{2}}^{n+1} \right)^2.
\end{equation}

Let us now define $W_k^n=V_{2^k}^n$ for all $n\geq 0$ and $k\in\N$.
The definitions of $M$ and the initial condition ensure that $W_k^0=0$ for all 
$k\in\N$. Moreover, the discrete entropy-entropy production inequality \eqref{ineq-entropy-discret} ensures that $\mathbb{E}^n\leq\mathbb{E}^0$ for all $n\geq 0$ and applying the inequalities
$$\forall x,y>0\quad  x\log \frac{x}{y}-x+y\geq (\sqrt{x}-\sqrt{y})^2\geq \frac{x}{2}-y,$$
we deduce a uniform bound of $W_0^n$ for all $n\geq 0$.
With $q=2^k-1=\zeta_k$ and $\varepsilon_k=\gamma A/\zeta_k$, we infer
from \eqref{ineq4dis} that 
\begin{equation}\label{ode}
\frac{W_k^{n+1}-W_k^n}{\Delta t} \leq -\varepsilon_k W_k^{n+1}+B\left(\zeta_k^{d/2}(\zeta_k+\varepsilon_k)(W_{k-1}^{n+1})^2+1\right)
\end{equation}
with 
$
B=\gamma^{-d/2}\max\{ \nu\m(\Omega),\frac{\tilde C}{\xi^{d/2}}A^{-d/2}, \frac{\tilde C}{\xi^{d/2}}A^{-d/2}\mu\}$.
Therefore, if $W_{k-1}^n$ is bounded for all $n$ by $E$, we conclude 
from \eqref{ode} that 
$$
  W_{k}^n\leq \frac{B}{\varepsilon_{k}}\left(\zeta_{k}^{d/2}(\zeta_{k}+\varepsilon_{k})
	E^2+1\right), \quad\forall n\geq 0.
$$
Set $\ds{\delta_{k}=B\zeta_k^{d/2}(\zeta_k+\varepsilon_k)/\varepsilon_k}$. 
As $\zeta_k^{d/2}(\zeta_k+\varepsilon_k)\geq 1$, it follows that
\begin{equation}\label{ineq5}
W_k^n\leq \delta_k (E^2+1), \quad \forall n\geq 0.
\end{equation}
We prove by induction (see \cite{Alikakos1979,Kowalczyk2005}) that for all $k\geq 0$,
$$
 W_k^n\leq 2\delta_k(2\delta_{k-1})^2\cdots (2\delta_1)^{2^{k-1}}{\mathcal K}^{2^k},
\quad \forall n\geq 0,
$$
where ${\mathcal K} = \max (1, \sup_{n\geq 0} W_0^n)$. This is a direct consequence of \eqref{ineq5}, remarking that with $E=2\delta_k(2\delta_{k-1})^2\cdots (2\delta_1)^{2^k-1}{\mathcal K}^{2^k}$ for all $k\ge 0$, we have $1\leq E^2$ (thanks to the definition of ${\mathcal K}$).

To conclude, we first remark that $\delta_k\leq D2^{(2+d/2)k}$ with $D=B/A$. Hence
$$
\prod_{j=0}^{k-1} (2\delta_{k-j})^{2^j}
\leq (2D)^{2^k-1}\cdot 2^{(2+d/2)\sum_{j=0}^{k-1} (k-j)2^j}
\leq (2D)^{2^k}\cdot 2^{(2+d/2)\cdot 2^k\sum_{\ell=1}^{\infty} \ell 2^{-\ell}},
$$
and since $\sum_{\ell=1}^{\infty} \ell 2^{-\ell}=2$, we find that 
$W_k^n\leq (2^{5+d}D{\mathcal K})^{2^k}$.
Taking the power $1/2^k$ of $W_k^n$ we obtain
$$
\Vert N_M^n\Vert_{L^{2^k}(\Omega)}\leq 2^{5+d}D{\mathcal K},\quad 
\Vert P_M^n\Vert_{L^{2^k}(\Omega)}\leq 2^{5+d}D{\mathcal K},\quad \forall n\geq 0,\quad\forall k\in\N,
$$
and passing to the limit $k\to\infty$ gives
$$
\Vert N_M^n\Vert_{L^{\infty}(\Omega)}\leq 2^{5+d}D{\mathcal K},\quad 
\Vert P_M^n\Vert_{L^{\infty}(\Omega)}\leq 2^{5+d}D{\mathcal K},\quad \forall n\geq 0.
$$

\textbf{Acknowledgements:} 
The authors have been partially supported by the bilateral French-Austrian
Amad\'ee-\"OAD project. M. B.-C. thanks the project ANR-14-CE25-0001 Achyl\-les. 
C. C.-H. thanks the team Inria/Rapsodi, the ANR Moonrise and the Labex Cempi (ANR-11-LABX-0007-01) for their support. A.J. acknowledges partial support from   
the Austrian Science Fund (FWF), grants P22108, P24304, and W1245.
\bibliographystyle{plain}
\bibliography{bib-DD-exp}

\end{document}